\documentclass[11pt]{article}
\usepackage[margin=1.15in]{geometry}
\usepackage{mathptmx}
\usepackage{amsmath,amssymb,amsthm}
\usepackage[colorlinks=true,linkcolor=blue,citecolor=blue,urlcolor=blue]{hyperref}

\theoremstyle{plain}
\newtheorem{theorem}{Theorem}
\newtheorem{proposition}{Proposition}
\newtheorem{lemma}{Lemma}
\newtheorem{corollary}{Corollary}
\theoremstyle{definition}

\newtheorem{remark}{Remark}

\newcommand{\beqn}{\begin{eqnarray*}}
\newcommand{\eeqn}{\end{eqnarray*}}

\newcommand{\Rp}{\mathbb{R}^n_{+}}
\newcommand{\Rpp}{\mathbb{R}^n_{++}}
\newcommand{\xbar}{\bar{x}}

\title{\bf Global stability of Michaelis--Menten networks}
\author{Eduardo D. Sontag%
\thanks{Department of Electrical and Computer Engineering and Department
of Bioengineering, Northeastern University, Boston, Massachusetts 02115,
USA.  Email: \texttt{e.sontag@northeastern.edu}}}
\date{}

\begin{document}
\maketitle

\begin{abstract}
\noindent
In a very nice paper, Blanchini, Breda, Giordano and Liessi
\cite{BBGL} studied networks of species coupled through
Michaelis--Menten interactions.
Under standard assumptions of strong connectedness and non-degeneracy, they show
that there is a unique equilibrium, which is (locally) asymptotically stable.
This note shows that global stability, which was stated there as an
open problem, holds as well.
In fact, whenever a positive equilibrium exists, it attracts every
solution with positive initial condition, with no connectedness or
non-degeneracy hypothesis at all; those assumptions are needed only in
order to guarantee existence and to cover solutions starting on the
boundary of the orthant.
The key is to note that the system is contractive with respect to the
Thompson part metric.
\end{abstract}

\section{Introduction}

The paper by Blanchini, Breda, Giordano and Liessi \cite{BBGL} studied
``Michaelis--Menten'' networks, common in biology, and proved that
these networks have at most one positive equilibrium, which is locally
asymptotically stable.  Furthermore, under strong connectedness and
non-degeneracy, the equilibrium exists.  They left the
question of global stability open, observing that the unweighted
$\infty$-norm of the deviation from equilibrium is not a global
Lyapunov function.  This note starts by discussing a short self-contained
proof of the local results, and then goes on to settle the global
question in the affirmative: under the same hypotheses, the positive
equilibrium attracts the whole nonnegative orthant. The key is to use
ideas from contractive system theory.
A \emph{contractive} system is one for which any two solutions
approach each other exponentially
with a rate that, in the situation studied here, is uniform on compact subsets of the open orthant rather than on the whole orthant. If a system is contractive,
equilibria are unique and globally attracting. The theory of
contractive systems relies upon
matrix measures (also known as logarithmic norms, see for instance
\cite{michelbook,desoer-vidyasagar-siam})
as applied to the
analysis of nonlinear systems (popularized by
Slotine~\cite{LOHMILLER1998683}, see also the self-contained 
expository article~\cite{Sontag2010} and the recent book~\cite{bullo}).

Using non-Euclidean \textit{weighted} metrics has been shown to be the
key for establishing global stability of many types of genetic and
other biological networks, see e.g.~\cite{russo_dibernardo_sontag09,14cdc_aminzare_tutorial}.
The ``right'' metric for Michaelis--Menten networks turns out to be
the logarithmic (Thompson) distance on the positive orthant.
Basically, one uses the weighted Lyapunov function:
\[
V(x)\;=\;\max_i\Bigl|\log\frac{x_i}{\bar x_i}\Bigr|.
\]
It turns out that an inequality already used in \cite{BBGL} only at the
equilibrium is exactly the infinitesimal contraction condition in that
metric, and it holds at every point, not merely equilibria.

\section{The model}

We consider systems of dimension $n\ge 2$, whose state coordinates
(the ``species'') take values (``concentrations'') $x_1,\dots,x_n\ge0$.
The general form of the equations is as follows:
\begin{equation}
\dot x_i \;=\; -\,d_i\,x_i \;+\; f_i(x),
\qquad
f_i(x)\;:=\;b_i+\sum_{j\ne i} h_{ij}(x_j),
\qquad i=1,\dots,n.
\label{eq:sys}
\end{equation}
The constants $d_i$ (``decay rates'') are assumed to be positive,
and the constants $b_i$ (which we may view as constant inputs) are nonnegative.
Each of the interaction functions
$h_{ij}$ is either identically zero or, for some positive constants
$a_{ij}$ and $k_{ij}$, takes one of the following two forms:
\begin{equation}
h_{ij}(s)\;=\;a_{ij}\,\frac{s}{k_{ij}+s}\quad\text{(activation)},
\qquad
h_{ij}(s)\;=\;a_{ij}\,\frac{k_{ij}}{k_{ij}+s}\quad\text{(inhibition)} .
\label{eq:hill}
\end{equation}
Observe that $f_i$ does not depend upon the respective $x_i$, and that
$0\le h_{ij}\le a_{ij}$. 
We think of a graph in which there is an edge $j\to i$ whenever
$h_{ij}\not\equiv 0$ and there is no edge if $h_{ij}\equiv 0$.
Let us write $E:=\{(i,j):j\ne i,\ h_{ij}\not\equiv0\}$ for the set of
edges.  The constants $a_{ij}$ and $k_{ij}$ are defined only for
$(i,j)\in E$, and we make the convention that every sum over $j\ne i$ and every
maximum over pairs $(i,j)$ appearing below is understood to range only
over $E$.
Using this graph terminology, we consider these three properties:
\begin{itemize}\itemsep1pt
\item[(A1)] \emph{no idle node}: for each $i$, either $b_i>0$ or $i$ has at least one incoming edge;
\item[(A2)] \emph{strong connectedness} of the graph;
\item[(A3)] \emph{non-degeneracy}: at least one $h_{ij}$ is inhibitory, or at least one $b_i>0$.
\end{itemize}

For $n\ge2$, (A2) implies (A1). Write $\Rp=[0,\infty)^n$ and $\Rpp=(0,\infty)^n$, and let
\begin{equation}
  \rho(x,z)\;:=\;\max_{1\le i\le n}\Bigl|\log\frac{x_i}{z_i}\Bigr|
  \;=\;
  \max_{1\le i\le n}\Bigl|\log x_i - \log z_i\Bigr|  ,
\qquad x,z\in\Rpp,
\label{eq:rho}
\end{equation}
the \emph{logarithmic distance} on the open orthant. This is Thompson's part metric
\cite{Thompson,LemmensNussbaum}; it is the sup-norm distance between $\log x$ and
$\log z$. It is known that $(\Rpp,\rho)$ is a complete metric space.
Of course, another way to think of this is to change coordinates via
$x \mapsto \log x$, so as to get a system evolving in $\mathbb{R}^n$, and then
considering the usual sup norm in $\mathbb{R}^n$.
%We use the same symbol in dimension one, $\rho(s,\sigma)=|\log (s/\sigma)|$.
Let us also introduce the following map:
\begin{equation}
P(x)\;:=\;\bigl(f_1(x)/d_1,\,\dots,\,f_n(x)/d_n\bigr)
\label{eq:P}
\end{equation}
whose fixed points are the equilibria of~\eqref{eq:sys}.

\section{The key observation}

\begin{lemma}\label{lem:gain}
Let $h$ be either function in \eqref{eq:hill}, with saturation level $a$. Then for every $s>0$
\begin{equation}
\frac{s\,|h'(s)|}{h(s)}\;=\;1-\frac{h(s)}{a}\;\in\;(0,1),
\qquad\text{equivalently}\qquad
h(s)-s\,|h'(s)|\;=\;\frac{h(s)^2}{a}\;>\;0 .
\label{eq:gain}
\end{equation}
Consequently $h$ strictly contracts in logarithmic distance: for all $s,\sigma>0$ with $s\ne\sigma$,
\begin{equation}
\left|\log{h(s)}-\log{h(\sigma)}\right|\;<\;\left|\log{s}-\log{\sigma}\right|
.
%\rho\bigl(h(s),h(\sigma)\bigr)\;<\;\rho(s,\sigma).
\label{eq:contract1d}
\end{equation}
\end{lemma}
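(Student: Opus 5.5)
The plan is to verify the identity \eqref{eq:gain} by direct computation for each of the two forms in \eqref{eq:hill}, and then to derive \eqref{eq:contract1d} by integrating the logarithmic derivative of $h$ in logarithmic coordinates.

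\emph{Activation.} Take $h(s)=a\,s/(k+s)$. Then $h'(s)=a\,k/(k+s)^2>0$, so
\[
\frac{s\,|h'(s)|}{h(s)}=\frac{s\,a k/(k+s)^2}{a s/(k+s)}=\frac{k}{k+s}.
\]
On the other hand, $1-h(s)/a=1-s/(k+s)=k/(k+s)$, so the two expressions agree.

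\emph{Inhibition.} Take $h(s)=a\,k/(k+s)$. Then $|h'(s)|=a\,k/(k+s)^2$, so
\[
\frac{s\,|h'(s)|}{h(s)}=\frac{s}{k+s}.
\]
This equals $1-k/(k+s)=1-h(s)/a$, as required.

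In both cases $0<h(s)<a$ for $s>0$, because $k>0$ and $s>0$, so the common value lies in $(0,1)$. The equivalent form follows by multiplying the identity by $h(s)>0$: $s|h'(s)|=h(s)-h(s)^2/a$, that is, $h(s)-s|h'(s)|=h(s)^2/a>0$.

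For \eqref{eq:contract1d}, I would pass to logarithmic coordinates. Set $u=\log s$ and $g(u)=\log h(e^u)$, which is smooth on $\mathbb{R}$. Its derivative is
\[
g'(u)=\frac{e^u h'(e^u)}{h(e^u)},
\]
so $|g'(u)|=s|h'(s)|/h(s)$, which lies in $(0,1)$ by \eqref{eq:gain}. Suppose, without loss of generality, that $\sigma<s$, and put $u_1=\log\sigma<u_2=\log s$. Then
\[
|g(u_2)-g(u_1)|\le\int_{u_1}^{u_2}|g'(u)|\,du<u_2-u_1 .
\]
The inequality is strict because the integrand is continuous and strictly less than $1$ on an interval of positive length. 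Alternatively, the mean value theorem gives $g(u_2)-g(u_1)=g'(\xi)(u_2-u_1)$ with $|g'(\xi)|<1$, which yields the same conclusion. Translated back, this is exactly \eqref{eq:contract1d}.

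There is no real obstacle here. The only point needing care is that the bound on $|g'|$ is pointwise strict but not uniform: $|g'(u)|\to1$ as $u\to\pm\infty$. Hence we get strict contraction for each fixed pair $s\neq\sigma$, but no uniform contraction factor; the lemma claims only the former.
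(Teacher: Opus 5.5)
Your proof is correct and follows the paper's own route essentially verbatim: you compute $s|h'(s)|/h(s)$ directly for each form in \eqref{eq:hill}, and then apply the mean value theorem to $u\mapsto\log h(e^u)$. One small slip, in the closing remark only and not affecting the argument: for a given $h$, $|g'(u)|\to1$ only at one end ($u\to-\infty$ for activation, $u\to+\infty$ for inhibition) and tends to $0$ at the other.
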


\begin{proof}
For activation, $h'(s)=ak/(k+s)^2$ and $h(s)=as/(k+s)$, so
$s h'(s)/h(s)=k/(k+s)=1-h(s)/a$. For inhibition, $h'(s)=-ak/(k+s)^2$ and
$h(s)=ak/(k+s)$, so $s|h'(s)|/h(s)=s/(k+s)=1-h(s)/a$. In both cases
$0<h(s)<a$ for $s>0$, so the quantity lies in $(0,1)$; the second form in
\eqref{eq:gain} is a rearrangement.

For \eqref{eq:contract1d}, put $H(\sigma):=\log h(e^{\sigma})$. Then
$H'(\sigma)=s h'(s)/h(s)$ at $s=e^{\sigma}$, so $|H'|<1$ everywhere, and the
mean value theorem gives $|H(\log s)-H(\log\sigma)|<|\log s-\log\sigma|$.
\end{proof}

\begin{remark}\label{rem:hill}
In general, for a Hill-type function $h(s)=a\,s^{p}/(k^{p}+s^{p})$
with $p$ not necessarily equal to one, $s|h'|/h=p\,(1-h/a)$,  whose
supremum over $s>0$ is $p$, and similarly for the 
decreasing Hill function. Thus, the condition ``$p\le 1$'' is
precisely the property that the logarithmic gain never exceeds
$1$. This is exactly what is needed in what follows.
If $p>1$, nothing works, as in the bistable two-node counterexample
given in \cite[Remark 3]{BBGL}, where the gain equals $8/5$ at the
unstable equilibrium. 
See Remark~\ref{rem:bistable} below for a very different manner in
which the bound may fail, in that case with Hill exponents equal to one.
\end{remark}

\begin{corollary}\label{cor:contract}
Assume \emph{(A1)}. Then $f_i>0$ on $\Rpp$ for each $i$, and for all $x,z\in\Rpp$ with $x\ne z$,
\begin{equation}
\left|\log\frac{f_i(x)}{f_i(z)}\right|\;<\;\rho(x,z),
\qquad\text{equivalently}\qquad
e^{-\rho(x,z)}f_i(z)\;<\;f_i(x)\;<\;e^{\rho(x,z)}f_i(z).
\label{eq:contractn}
\end{equation}
In particular, $\rho\bigl(P(x),P(z)\bigr)<\rho(x,z)$,
so the map $P$ is strictly distance-decreasing for the logarithmic distance
on $\Rpp$.  
\end{corollary}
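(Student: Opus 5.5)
Let $\rho:=\rho(x,z)$. Positivity of $f_i$ on $\Rpp$ comes straight from (A1) and the form of the interactions. Either $b_i>0$, or some edge $(i,j)\in E$ exists; in the latter case $h_{ij}(x_j)>0$ for $x_j>0$, since both forms in \eqref{eq:hill} are strictly positive on $(0,\infty)$. All terms are nonnegative, so $f_i(x)>0$.

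For the contraction estimate I would work termwise and then combine the terms. The basic fact is that for $s,\sigma>0$ we have $|\log s-\log\sigma|=|\log(x_j/z_j)|\le\rho$ when $s=x_j$, $\sigma=z_j$. Lemma~\ref{lem:gain}, specifically \eqref{eq:contract1d}, then gives for each edge $(i,j)\in E$
\[
e^{-\rho}h_{ij}(z_j)\;\le\; h_{ij}(x_j)\;\le\; e^{\rho}h_{ij}(z_j).
\]
The inequalities are strict whenever $x_j\ne z_j$. If $x_j=z_j$ they are still strict, because $\rho>0$ (as $x\ne z$) and $h_{ij}(z_j)>0$. So each edge term satisfies the strict two-sided bound $e^{-\rho}h_{ij}(z_j)<h_{ij}(x_j)<e^{\rho}h_{ij}(z_j)$.

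The constant term also satisfies $e^{-\rho}b_i\le b_i\le e^{\rho}b_i$, strictly if $b_i>0$. Summing these over all terms gives $e^{-\rho}f_i(z)\le f_i(x)\le e^{\rho}f_i(z)$. Strictness holds because at least one summand is strictly bounded: either $b_i>0$ or some edge exists, again by (A1). Since $f_i(z)>0$, this yields \eqref{eq:contractn}. Taking logarithms gives the first form.

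Finally, $\log(P_i(x)/P_i(z))=\log(f_i(x)/f_i(z))$, because the factor $1/d_i$ cancels. Taking the maximum over the finitely many indices $i$ preserves the strict inequality, so $\rho(P(x),P(z))<\rho(x,z)$. The only delicate point is the strictness when some coordinates coincide, that is, when $x_j=z_j$ for the relevant $j$. This is handled above by using $\rho>0$ together with the positivity of each contributing term, rather than relying on \eqref{eq:contract1d}.
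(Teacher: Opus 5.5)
Your proposal is correct and follows essentially the same route as the paper. Both arguments apply \eqref{eq:contract1d} edge by edge, handle the case $x_j=z_j$ separately using $\rho(x,z)>0$ and $h_{ij}(z_j)>0$, sum the terms with (A1) guaranteeing at least one strict summand, and then cancel $d_i$ and maximize over $i$; the only cosmetic difference is that you carry both sides of the bound at once, whereas the paper proves the upper bound and then exchanges $x$ and $z$.
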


\begin{proof}
Positivity of $f_i$ follows immediately from property (A1), because
each inhibitory or activating term is everywhere positive, and we may
use that $b_i>0$ when there are no such terms.
Next, take $r:=\rho(x,z)>0$.
Fix any two indices $i$ and $j$ for which $h_{ij}\not\equiv0$.
If $x_j\ne z_j$, then, by \eqref{eq:contract1d}, it follows that
$|\log (h_{ij}(x_j)/h_{ij}(z_j))|<|\log(x_j/z_j)|\le r$.
If instead $x_j=z_j$, then
$h_{ij}(x_j)=h_{ij}(z_j)<e^{r}h_{ij}(z_j)$, because $r>0$ and
$h_{ij}(z_j)>0$. In both cases, we conclude that
\[
h_{ij}(x_j)\;<\;e^{r}\,h_{ij}(z_j).
\]
Since $r>0$, we know that $b_i\le e^{r}b_i$, so we can take the sum
over $j$, and adding $b_i$ conclude that $f_i(x)<e^{r}f_i(z)$, 
and the sum includes at least one strictly positive term because of (A1).
The same argument works after exchanging $x$ and $z$.
Finally, we may divide by $d_i$ and take the maximum over $i$, which
gives the desired conclusion about $P$.
\end{proof}

\begin{remark}
  We note that the last conclusion of Corollary~\ref{cor:contract} is weaker than $P$ being a
contraction in the sense of Banach, since there need be no single
factor less than one valid on all of $\Rpp$.  However, a uniform
factor is available on compact subsets, and this is all that is needed for
the exponential estimates below.
\end{remark}

\section{Boundedness and existence of an equilibrium}

Let us introduce constants $c_i$ and a rectangle $B$ as follows:
\begin{equation}
c_i\;:=\;\frac{1}{d_i}\Bigl(b_i+\sum_{j\ne i}a_{ij}\Bigr),
\qquad
B\;:=\;[0,c_1]\times\cdots\times[0,c_n].
\label{eq:box}
\end{equation}

\begin{lemma}\label{lem:box}
Both $\Rp$ and $B$ are forward invariant for \eqref{eq:sys}.
Moreover, $B$ attracts solutions: if $x(0)\in\Rp$, then, for all $i$:
\[
\limsup_{t\to\infty}x_i(t)\le c_i.
\]
It follows that the system has at least one equilibrium in
$\Rp$. Moreover, every equilibrium in $\Rp$ belongs to the box $B$.
\end{lemma}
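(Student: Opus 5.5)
The plan rests on the bounds $0\le f_i(x)\le b_i+\sum_{j\ne i}a_{ij}=d_ic_i$, valid for every $x\in\Rp$ because $0\le h_{ij}\le a_{ij}$ on $[0,\infty)$. I will use these bounds three times: for invariance, for attraction, and for locating equilibria. Existence of an equilibrium will then follow from a fixed-point argument for the map $P$ of \eqref{eq:P} on the box $B$.

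\emph{Invariance of $\Rp$.} The vector field is locally Lipschitz on a neighborhood of $\Rp$; each $h_{ij}$ is smooth for $s>-k_{ij}$. On the face $x_i=0$ with $x\in\Rp$ we have $\dot x_i=f_i(x)\ge0$. Hence the field is quasi-positive, and the standard tangency (Nagumo) criterion gives forward invariance of $\Rp$. To avoid boundary subtleties I can instead argue by comparison. Along a solution that stays in $\Rp$, the equation $\dot x_i=-d_ix_i+f_i(x(t))$ with $f_i\ge0$ gives $x_i(t)\ge e^{-d_it}x_i(0)\ge0$ by variation of constants. If necessary I would extend each $h_{ij}$ to negative arguments by its value at $0$, so that the extension is nonnegative and Lipschitz, and note that solutions of the extended system are solutions of the original one.

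\emph{Invariance of $B$ and attraction.} Within $\Rp$, variation of constants yields
\[
x_i(t)=e^{-d_it}x_i(0)+\int_0^t e^{-d_i(t-\tau)}f_i(x(\tau))\,d\tau\;\le\; e^{-d_it}x_i(0)+c_i\bigl(1-e^{-d_it}\bigr).
\]
If $x_i(0)\le c_i$, the right-hand side is at most $c_i$, so $B$ is forward invariant. For arbitrary $x(0)\in\Rp$ the same bound gives $\limsup_{t\to\infty} x_i(t)\le c_i$. It also shows that solutions are bounded, so they exist for all $t\ge0$.

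\emph{Equilibria.} The map $P$ is continuous on $\Rp$ and sends $\Rp$ into $B$, since $0\le f_i/d_i\le c_i$. In particular $P$ maps the compact convex set $B$ into itself, and Brouwer's fixed point theorem gives a fixed point $\xbar\in B$, which is an equilibrium of \eqref{eq:sys}. An alternative route uses the invariance of the compact convex set $B$ under the flow, which also yields a rest point, but Brouwer applied to $P$ is the shorter path. Conversely, any equilibrium $\xbar\in\Rp$ satisfies $\xbar=P(\xbar)\in B$, so every equilibrium in $\Rp$ lies in $B$.

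The only step requiring care is invariance of $\Rp$, since that is where the boundary behavior has to be handled. Everything else is a one-line estimate. Note that this lemma does not claim positivity of the equilibrium; that would need (A1)--(A3) and is not required here.
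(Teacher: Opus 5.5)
Your proposal is correct and takes essentially the same route as the paper. The bounds $0\le f_i\le d_ic_i$ on $\Rp$ feed a comparison argument for invariance and attraction; your variation-of-constants estimate is that comparison, and the extension $h_{ij}(\max(s,0))$ makes the $\Rp$ step non-circular. Brouwer's theorem for $P$ on $B$, together with $\xbar=P(\xbar)\in B$, then gives existence and localization of equilibria, exactly as in the paper.
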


\begin{proof}
From $0\le f_i\le d_ic_i$ on $\Rp$ we know that $\dot x_i\ge 0$ when $x_i=0$ and
$\dot x_i\le -d_i(x_i-c_i)$ everywhere.
The first two assertions follow by a comparison argument.
If $\xbar\in\Rp$ is any equilibrium, then $d_i\bar x_i=f_i(\xbar)\le d_ic_i$,
so $\xbar\in B$.
Since $P$ is continuous and maps the compact convex set $B$ into itself,
by Brouwer's theorem it has a fixed point, and the fixed points of $P$
are exactly the equilibria of \eqref{eq:sys}.
\end{proof}

\section{Equilibria are strictly positive}

\begin{proposition}\label{prop:pos}
Assume that property \emph{(A2)} holds.
Then, every equilibrium in $\Rp$ is either $0$ or is strictly positive.
Moreover, $0$ is an equilibrium if and only if \emph{(A3)} fails.
We conclude that under both
\emph{(A2)} and \emph{(A3)} every equilibrium in $\Rp$ is strictly positive.
\end{proposition}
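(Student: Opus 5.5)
The plan is to exploit the equilibrium equations $d_i\bar x_i=f_i(\xbar)=b_i+\sum_{j\ne i}h_{ij}(\bar x_j)$ together with the sign structure of the interaction functions. For $s\ge0$, an activation satisfies $h_{ij}(s)>0$ iff $s>0$, while an inhibition satisfies $h_{ij}(s)>0$ always (indeed $h_{ij}(0)=a_{ij}$). Consequently $\bar x_i=0$ forces $b_i=0$ and $h_{ij}(\bar x_j)=0$ for every edge $j\to i$. In turn this means that every incoming edge to $i$ is an activation and that $\bar x_j=0$ for every in-neighbor $j$ of $i$.

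\emph{Step 1 (zero-or-positive dichotomy).} Suppose $\xbar\in\Rp$ is an equilibrium with some $\bar x_i=0$. By the observation above, every in-neighbor $j$ of $i$ has $\bar x_j=0$. Iterating, every node $j$ that has a directed path $j\to\cdots\to i$ has $\bar x_j=0$. By (A2) every node has such a path to $i$, so $\xbar=0$. Hence an equilibrium is either $0$ or has all coordinates positive.

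\emph{Step 2 ($0$ is an equilibrium iff (A3) fails).} Evaluating at $x=0$ gives $f_i(0)=b_i+\sum_{j\ne i}h_{ij}(0)$. Here activations contribute $0$ and inhibitions contribute $a_{ij}>0$. So $P(0)=0$ iff all $b_i=0$ and no edge is inhibitory, which is exactly the failure of (A3). (This step does not even need (A2).)

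\emph{Step 3 (conclusion).} Under (A2) and (A3), Step 2 excludes $0$, and Step 1 then forces every equilibrium in $\Rp$ to be strictly positive. None of these steps is a real obstacle. The only point needing care is the propagation in Step 1: it should be phrased as a backward induction along directed paths into $i$, which strong connectedness guarantees exist from every node.
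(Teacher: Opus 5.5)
Your proof is correct and essentially matches the paper's. The paper phrases Step 1 as a cut argument: no edge enters $Z=\{i:\bar x_i=0\}$ from its complement, so (A2) forces $Z=\emptyset$ or $Z=\{1,\dots,n\}$, which is equivalent to your backward propagation along directed paths; your direct evaluation of $f_i(0)$ in Step 2 gives the same equivalence with (A3) that the paper obtains from the same sign facts.
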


\begin{proof}
Suppose that $\xbar\in\Rp$ is an equilibrium.
Consider the index subsets $Z=\{i:\bar x_i=0\}$ and $N=\{i:\bar x_i>0\}$.

For each $i\in Z$, since
\[
0\;=\;d_i\bar x_i\;=\;f_i(\xbar)\;=\;b_i+\sum_{j\ne i}h_{ij}(\bar x_j),
\]
and the right-hand side is a sum of nonnegative terms, it follows that
$b_i=0$, that no $h_{ij}$ with $j\ne i$ is inhibitory
(because an inhibitory term is positive even at $\bar x_j=0$); and
that every activating term vanishes, which implies that $\bar x_j=0$
for each index $j$ for which there is an edge $j\to i$.
Therefore no edge enters $Z$ from $N$.

Now, if $Z$ and $N$ were both nonempty, this would contradict
(A2). Therefore, we have two cases: (a) $Z=\emptyset$, which says that
$\xbar\in\Rpp$, or (b) $Z=\{1,\dots,n\}$, which gives $\xbar=0$.
Then the previous equality gives that $b_i=0$ and no interaction is inhibitory,
which shows that (A3) fails.

Conversely, if (A3) is false, then $f_i(0)=0$ for all $i$, and $0$ is
an equilibrium. 
\end{proof}

\section{Local stability}

Let us make the logarithmic change of coordinates mentioned in the
introduction: for $x_i\in\Rpp$, let $y_i:=\log x_i$.
Then \eqref{eq:sys} transforms into $\dot y=G(y)$, where:
\begin{equation}
G_i(y)\;=\;-d_i+e^{-y_i}f_i\bigl(e^{y_1},\dots,e^{y_n}\bigr).
\label{eq:logfield}
\end{equation}
Using that $f_i$ does not depend on $x_i$, and writing $x_i=e^{y_i}$,
we have:
\[
\frac{\partial G_i}{\partial y_i}\;=\;-\frac{f_i(x)}{x_i},
\qquad
\frac{\partial G_i}{\partial
  y_j}\;=\;\frac{x_j\,h_{ij}'(x_j)}{x_i}\quad \forall\; j\ne i.
\]
By Lemma~\ref{lem:gain}, we have that
\[
\sum_{j\ne i}x_j|h_{ij}'(x_j)|
=\sum_{j\ne
  i}\bigl(h_{ij}(x_j)-h_{ij}(x_j)^2/a_{ij}\bigr)=f_i(x)-q_i(x),
\]
where
\begin{equation}
q_i(x)\;:=\;b_i+\sum_{j\ne i}\frac{h_{ij}(x_j)^2}{a_{ij}}
\label{eq:q}
\end{equation}
is strictly positive on $\Rpp$ if we assume property (A1).
Therefore, everywhere on $\Rpp$ we conclude that:
\begin{equation}
\frac{\partial G_i}{\partial y_i}
\;+\;\sum_{j\ne i}\Bigl|\frac{\partial G_i}{\partial y_j}\Bigr|
\;=\;-\,\frac{q_i(x)}{x_i}\;<\;0 .
\label{eq:dominance}
\end{equation}

This means that, after a change to logarithmic coordinates, the
Jacobian is everywhere strictly row-diagonally dominant with negative diagonal.
Another way to state this is to say that the matrix measure $\mu_\infty$ of
that Jacobian is negative at every point, and this is precisely the
infinitesimal contraction condition given in~\cite{Sontag2010} with
respect to the sup-norm.

The local result is the special case $x=\xbar$.

\begin{theorem}[local stability]\label{thm:local}
Let $\xbar\in\Rpp$ be an equilibrium of \eqref{eq:sys}.
We denote by $J$ the Jacobian of \eqref{eq:sys} at $\xbar$, and we
introduce
$D=\operatorname{diag}(\bar x_1,\dots,\bar x_n)$.
We have that $A:=D^{-1}JD$ is strictly row-diagonally dominant with
negative diagonal, since:
\[
A_{ii}+\sum_{j\ne i}|A_{ij}|\;=\;-\frac{q_i(\xbar)}{\bar x_i}\;<\;0,
\qquad i=1,\dots,n .
\]
It follows that $J$ is Hurwitz, all its eigenvalues having
real part at most $-\lambda$, where
\[
\lambda\;=\;\min_i\frac{q_i(\xbar)}{\bar x_i}
\;=\;\min_i\frac{1}{\bar x_i}\Bigl(b_i+\sum_{j\ne i}\frac{h_{ij}(\bar x_j)^2}{a_{ij}}\Bigr)\;>\;0.
\]
Therefore $\xbar$ is locally exponentially stable.
\end{theorem}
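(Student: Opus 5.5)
The plan is to compute $J$ directly in original coordinates and then relate it to the log-coordinate Jacobian already studied in \eqref{eq:dominance}. At $\xbar$ we have $J_{ii}=-d_i$ and $J_{ij}=h_{ij}'(\bar x_j)$ for $j\ne i$. Conjugating by $D$ gives
\[
A_{ij}=\bar x_i^{-1}J_{ij}\bar x_j,
\]
so that $A_{ii}=-d_i$ and $A_{ij}=\bar x_j h_{ij}'(\bar x_j)/\bar x_i$.

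The first step is to identify $-d_i$ with $-f_i(\xbar)/\bar x_i$, which holds because $\xbar$ is an equilibrium. With this, $A$ coincides with the Jacobian of $G$ at $y=\log\xbar$, whose entries were computed just before \eqref{eq:dominance}. This is to be expected: the log change of coordinates linearizes at $\xbar$ to $\delta y=D^{-1}\delta x$. Row dominance then follows from Lemma~\ref{lem:gain}. Since $\sum_{j\ne i}\bar x_j|h_{ij}'(\bar x_j)|=f_i(\xbar)-q_i(\xbar)$, we get
\[
A_{ii}+\sum_{j\ne i}|A_{ij}|=-q_i(\xbar)/\bar x_i .
\]
To see that this is negative, I need $q_i(\xbar)>0$. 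This holds whenever $f_i(\xbar)=d_i\bar x_i>0$: each nonzero term of $f_i$, namely $b_i$ or some $h_{ij}(\bar x_j)>0$, contributes a positive term to $q_i$. So no assumption beyond $\xbar\in\Rpp$ is needed, and I would point this out explicitly rather than invoke (A1).

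For the eigenvalue bound, I will apply Gershgorin's theorem to $A$, which is similar to $J$ and so has the same spectrum. Every eigenvalue lies in a disc centered at $A_{ii}$ with radius $\sum_{j\ne i}|A_{ij}|$. Its real part is therefore at most $A_{ii}+\sum_{j\ne i}|A_{ij}|=-q_i/\bar x_i\le-\lambda$. Equivalently, $\mu_\infty(A)\le-\lambda$ bounds the spectral abscissa. Once $J$ is Hurwitz, local exponential stability follows from the standard linearization principle, with decay rate any number below $\lambda$; the smoothness of the vector field near $\xbar$ is clear.

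There is no real obstacle here. The only point needing care is the use of the equilibrium identity $d_i=f_i(\xbar)/\bar x_i$ to match the diagonal of $A$ with that of the log-coordinate Jacobian. Away from equilibria, the two Jacobians differ, and that identity is what makes the local statement a specialization of \eqref{eq:dominance}.
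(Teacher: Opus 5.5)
Your proposal is correct and follows the paper's proof essentially step for step: you identify $D^{-1}JD$ with $\mathrm{D}G(\log\xbar)$ via the equilibrium identity $d_i=f_i(\xbar)/\bar x_i$, apply \eqref{eq:dominance} at $\xbar$, and conclude with Gershgorin and the invariance of the spectrum under similarity. Your extra observation that $q_i(\xbar)>0$ follows from $f_i(\xbar)=d_i\bar x_i>0$ alone, without invoking (A1), is correct and makes explicit a point the paper leaves implicit.
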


\begin{proof}
Differentiating $y\mapsto x=e^{y}$ at an equilibrium gives
$\partial G_i/\partial y_j = x_j(\partial f_i/\partial x_j)/x_i$ for $j\ne i$ and
$\partial G_i/\partial y_i=-d_i$ (using $f_i(\xbar)=d_i\bar x_i$), i.e.\
$\mathrm{D}G(\log\xbar)=D^{-1}JD$. Now apply \eqref{eq:dominance} at $x=\xbar$. A matrix
that is strictly row-diagonally dominant with negative diagonal has spectrum in
$\{\operatorname{Re}s\le-\lambda\}$ by Gershgorin, and similarity preserves the spectrum.
\end{proof}

\begin{remark}
An equivalent way to understand local stability is to say that
$\max_i|x_i-\bar x_i|/\bar x_i$ is a local Lyapunov function, as
is done in~\cite{BBGL}. The point of what follows is that, if we replace the
frozen weights $1/\bar x_i$ at the equilibrium by the
\textit{state-dependent} weights $1/x_i$, in other words, transforming
logarithmically, this argument works globally.
\end{remark}

\section{Uniqueness and global convergence}

We will use the following elementary fact from nonsmooth analysis: if
$\psi=\max_{\alpha}\varphi_\alpha$ is the pointwise maximum of a
family of finitely many $C^1$ functions of $t$, then the upper right Dini
derivative of $\psi$ is
\[
D^{+}\psi(t)=\max\{\dot\varphi_\alpha(t):\varphi_\alpha(t)=\psi(t)\}.
\]

\begin{theorem}[trajectories attract each other]\label{thm:two}
Assume \emph{(A1)}, and let $x(\cdot)$ and $z(\cdot)$ be solutions of
\eqref{eq:sys} in $\Rpp$ with $x(0)\ne z(0)$. Then
$t\mapsto \rho\bigl(x(t),z(t)\bigr)$ is strictly decreasing on $[0,\infty)$.
\end{theorem}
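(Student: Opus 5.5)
Work in logarithmic coordinates $u(t)=\log x(t)$, $v(t)=\log z(t)$, which satisfy $\dot u=G(u)$, $\dot v=G(v)$ with $G$ as in \eqref{eq:logfield}. Define $\psi(t):=\rho(x(t),z(t))=\max_i\max\{u_i-v_i,\,v_i-u_i\}$, a finite maximum of $C^1$ functions of $t$. By uniqueness of solutions, $x(0)\ne z(0)$ implies $x(t)\ne z(t)$ for all $t$, so $\psi(t)>0$ throughout. The goal is to show $D^+\psi(t)<0$ for every $t\ge0$. A continuous function whose upper right Dini derivative is everywhere negative is strictly decreasing (standard Dini-derivative monotonicity), which gives the theorem.

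Fix $t$ and write $r=\psi(t)>0$. By the Dini formula quoted above, it suffices to treat each active branch. Suppose $u_i-v_i=r$, that is, $x_i=e^{r}z_i$; the case $v_i-u_i=r$ is symmetric, with $x$ and $z$ exchanged. Then
\[
\frac{d}{dt}(u_i-v_i)=G_i(u)-G_i(v)=\frac{f_i(x)}{x_i}-\frac{f_i(z)}{z_i}=\frac{f_i(x)-e^{r}f_i(z)}{x_i},
\]
using $x_i=e^r z_i$ and the cancellation of the constant $-d_i$ terms. Since $\rho(x,z)=r$ and $x\ne z$, Corollary~\ref{cor:contract} gives $f_i(x)<e^{r}f_i(z)$, so this derivative is strictly negative. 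In the symmetric case the same corollary gives $f_i(z)<e^{r}f_i(x)$, and the derivative $(f_i(z)-e^r f_i(x))/z_i$ is again negative. Every active branch therefore has negative derivative, and $D^+\psi(t)<0$.

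The only delicate point is passing from $D^+\psi<0$ pointwise to strict monotonicity. I would argue by contradiction: if $\psi(t_1)\ge\psi(t_0)$ for some $t_0<t_1$, consider $\psi(t)-\varepsilon(t-t_0)$ for small $\varepsilon>0$ and take the last time it attains its maximum on $[t_0,t_1]$. At that point the Dini derivative must be $\ge \varepsilon>0$, a contradiction unless that time is $t_1$ itself. That boundary case is handled by negativity of $D^+\psi(t_0)$, which forces $\psi<\psi(t_0)$ just after $t_0$, combined with continuity.

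Alternatively, one can avoid Dini-derivative technicalities altogether. Integrate the inequality over $[t,t+h]$ using compactness of the trajectory segment, so that the strict inequality in Corollary~\ref{cor:contract} becomes uniform. I expect this monotonicity lemma to be the only step requiring care; the computation itself is immediate from the corollary.
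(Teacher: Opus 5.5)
Your proposal is correct and is essentially the paper's proof: both use the computation $\frac{d}{dt}\log(x_i/z_i)=\bigl(f_i(x)-e^{w_i}f_i(z)\bigr)/x_i$ at an active index, Corollary~\ref{cor:contract} to fix the sign in both cases $w_i=\pm r$, and the Dini-derivative formula for a finite maximum. The paper takes the step from $D^{+}\rho<0$ to strict decrease for granted, and your sketch of that step is muddled as written (at the last maximizer of $\psi(t)-\varepsilon(t-t_0)$ you only get $D^{+}\psi\le\varepsilon$); it is repaired by taking $s=\max\{t\in[t_0,t_1]:\psi(t)\le\psi(t_0)\}$, which yields the contradiction $D^{+}\psi(s)\ge0$ whenever $\psi(t_1)>\psi(t_0)$, and then excluding $\psi(t_1)=\psi(t_0)$ because $\psi$ would then be constant on $[t_0,t_1]$, forcing $D^{+}\psi(t_0)=0$.
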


\begin{proof}
With the transformation $w_i:=\log(x_i/z_i)$, we have
$\rho(x,z)=\max_i|w_i|$. Dividing \eqref{eq:sys} by $x_i$ and by $z_i$
respectively, 
\[
\dot w_i\;=\;\frac{\dot x_i}{x_i}-\frac{\dot z_i}{z_i}
\;=\;\frac{f_i(x)}{x_i}-\frac{f_i(z)}{z_i}
\;=\;\frac{1}{x_i}\Bigl[f_i(x)-e^{w_i}f_i(z)\Bigr],
\]
because $z_i=x_i e^{-w_i}$.
By uniqueness of solutions, we know that $x(t)\ne z(t)$ for every
$t\ge0$, and thus $r:=\rho(x(t),z(t))>0$ throughout.
Now fix any $t$, and let $i$ be an index attaining the maximum.

We consider two cases: (a) if $w_i(t)=r$, then Corollary~\ref{cor:contract} gives
$f_i(x)<e^{r}f_i(z)$, and hence $\dot w_i(t)<0$;
(b) if, instead, $w_i(t)=-r$, then
$f_i(x)>e^{-r}f_i(z)$ and hence $\dot w_i(t)>0$.
In either case, the active function
$\pm w_i$ is strictly decreasing, so we have that
$D^{+}\rho(x(t),z(t))<0$ as desired.
\end{proof}

\begin{theorem}[global stability on the open orthant]\label{thm:global}
Assume property \emph{(A1)}, and suppose that~\eqref{eq:sys} has an
equilibrium $\xbar\in\Rpp$.
Define:
\[
V(x)\;:=\;\rho(x,\xbar)\;=\;\max_{i}\Bigl|\log\frac{x_i}{\bar x_i}\Bigr| .
\]
Then:
\begin{enumerate}\itemsep1pt
\item[(a)] $\xbar$ is the \emph{only} equilibrium of \eqref{eq:sys} in $\Rpp$;
\item[(b)] $V$ is a strict Lyapunov function: for each $x(0)\in\Rpp$ with
$x(0)\ne\xbar$, the function $t\mapsto V(x(t))$ is strictly decreasing on $[0,\infty)$.
In particular, we have properness (coercivity): each sublevel set
$K_r:=\{V\le r\}=\prod_i\bigl[e^{-r}\bar x_i,\,e^{r}\bar x_i\bigr]$
is compact, contained in $\Rpp$, and forward invariant;
\item[(c)] every solution with initial condition $x(0)\in\Rpp$
  converges to $\xbar$, at a rate that is exponential in the
logarithmic distance:
\[
V(x(t))\;\le\;e^{-\lambda_{r} t}\,V(x(0)),
\qquad
\lambda_{r}:=\min_{\xi\in K_{r}}\min_i\frac{q_i(\xi)}{\xi_i}>0,
\quad r:=V(x(0)).
\]
\end{enumerate}
\end{theorem}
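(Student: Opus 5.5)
Parts (a) and (b) follow from Theorem~\ref{thm:two} with $z(t)\equiv\xbar$. For (a), if $\hat x\in\Rpp$ were a second equilibrium with $\hat x\ne\xbar$, the constant solutions $x(t)\equiv\hat x$ and $z(t)\equiv\xbar$ would give a constant function $\rho(\hat x,\xbar)$. Theorem~\ref{thm:two} says this function is strictly decreasing, a contradiction. (Alternatively, a fixed point of $P$ other than $\xbar$ would contradict $\rho(P(\hat x),P(\xbar))<\rho(\hat x,\xbar)$ from Corollary~\ref{cor:contract}.)

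For (b), apply Theorem~\ref{thm:two} with $z\equiv\xbar$ to get that $V(x(t))$ is strictly decreasing. The set $K_r$ is the stated product of compact intervals, directly from the definition of $V$. It lies in $\Rpp$ because $e^{-r}\bar x_i>0$. It is forward invariant because $V$ is nonincreasing along solutions. This also guarantees that solutions starting in $\Rpp$ remain in $\Rpp$ and exist for all time, so the use of Theorem~\ref{thm:two} is legitimate.

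For (c), I would sharpen the Dini-derivative computation in the proof of Theorem~\ref{thm:two} to a quantitative bound. Set $w_i=\log(x_i/\bar x_i)$ and $v(t)=V(x(t))=\max_i|w_i|$. Then
\[
\dot w_i=\frac{1}{x_i}\bigl[f_i(x)-e^{w_i}f_i(\xbar)\bigr]=\frac{1}{x_i}\bigl[f_i(x)-e^{w_i}d_i\bar x_i\bigr]=\frac{f_i(x)}{x_i}-d_i .
\]
In the log coordinates $y=\log x$ this is $\dot w_i=G_i(y)-G_i(\bar y)$, since $G(\bar y)=0$. By the mean value theorem along the segment from $\bar y$ to $y$,
\[
\dot w_i=\int_0^1\sum_j \frac{\partial G_i}{\partial y_j}(\bar y+s(y-\bar y))\,ds\;w_j .
\]
The whole segment lies in $\log K_r$, because $K_r$ is a box and hence convex in log coordinates. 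Take an active index $i$ with $w_i=v$. Using $|w_j|\le v$ and the negative diagonal, the integrand satisfies
\[
\sum_j\frac{\partial G_i}{\partial y_j}\,w_j\le\Bigl(\frac{\partial G_i}{\partial y_i}+\sum_{j\ne i}\Bigl|\frac{\partial G_i}{\partial y_j}\Bigr|\Bigr)v .
\]
By \eqref{eq:dominance} the bracket equals $-q_i(\xi)/\xi_i\le-\lambda_r$, where $\xi$ is the point on the segment. Hence $\dot w_i\le-\lambda_r v$. The case $w_i=-v$ is symmetric and gives $\frac{d}{dt}(-w_i)\le-\lambda_r v$.

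The Dini-derivative formula then yields $D^+v\le-\lambda_r v$, and a standard comparison lemma gives $v(t)\le e^{-\lambda_r t}v(0)$, which implies $x(t)\to\xbar$. The constant $\lambda_r$ is positive because $q_i(\xi)/\xi_i$ is continuous and positive on the compact set $K_r\subset\Rpp$, with positivity given by (A1). The only delicate step is this integral mean-value estimate. The key point is that the diagonal entry $\partial G_i/\partial y_i=-f_i/x_i$ is always negative, so multiplying it by $w_i=\pm v$ gives exactly $\mp v\,|\partial G_i/\partial y_i|$, and the dominance inequality can then be applied pointwise along the segment, uniformly over $K_r$.
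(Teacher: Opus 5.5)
Your proposal is correct and follows essentially the paper's route. Parts (a) and (b) come from Corollary~\ref{cor:contract} and Theorem~\ref{thm:two} with $z\equiv\xbar$, and (c) uses the same integral mean-value representation of $\dot w$ in log coordinates with the row-dominance bound \eqref{eq:dominance} on the convex set $\log K_r$. The differences are cosmetic: you get convergence directly from the exponential estimate instead of the paper's separate $\omega$-limit argument, and your justification that solutions stay in $\Rpp$ is circular as phrased but unnecessary, since $x_i(t)\ge e^{-d_it}x_i(0)>0$ and Lemma~\ref{lem:box} already gives boundedness.
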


\begin{proof}
(a) Two distinct positive equilibria $\xbar,\bar z$ would be two
  distinct fixed points of $P$, which would contradict $\rho(P\xbar,P\bar z)<\rho(\xbar,\bar z)$ from
Corollary~\ref{cor:contract}.

(b) Apply Theorem~\ref{thm:two} with $z(\cdot)\equiv\xbar$. Forward
invariance of $K_r$ follows from the fact that $V$ does not increase,
and $K_r$ is a box with positive lower corner. 

(c)
The convergence argument is a routine Lyapunov one, exactly as in any
textbook such as~\cite{mct2}, but let's give it to make the proof self-contained.
By (b), $V(x(t))$ decreases to some $V_\infty\ge0$, while the
solution remains in the compact set $K_{r}\subset\Rpp$. Therefore, its
$\omega$-limit set $\Omega$ is nonempty, compact, and invariant, and
$V\equiv V_\infty$ on $\Omega$. Suppose that $V_\infty$ is positive. Then, take
any $p\in\Omega$, and observe that the solution through $p$ stays in
$\Omega$, but on the other hand $V$ strictly decreases along the
solution by (b), which gives a contradiction.
Therefore $V_\infty=0$, i.e.\ $x(t)\to\xbar$.

For the exponential rate, note that $\log K_r$ is a box, and is hence
a convex set, and that \eqref{eq:dominance} bounds the matrix measure
of $\mathrm{D}G$ by $-\lambda_r$ on it.  Theorem~1 of
\cite{Sontag2010} applies verbatim in the coordinates $y$ and gives
the estimate.
Concretely, with $v(t)=\log x(t)-\log\xbar$ and $G(\log\xbar)=0$,
\[
\dot v(t)=A(t)\,v(t),\qquad A(t):=\int_0^1 \mathrm{D}G\bigl(\log\xbar+s\,v(t)\bigr)\,ds .
\]
The segment of integration lies in the log image $\log K_{r}$, and therefore
$A_{ii}(t)+\sum_{j\ne i}|A_{ij}(t)|\le-\lambda_{r}$ for every $i$.
Observe that $\|v(t)\|_\infty=V(x(t))$.
If the index $i$ is a maximizing one, $v_i(t)=\|v(t)\|_\infty>0$, then
\[
\dot v_i(t)\;\le\;\Bigl(A_{ii}(t)+\sum_{j\ne i}|A_{ij}(t)|\Bigr)\,\|v(t)\|_\infty
\;\le\;-\lambda_{r}\,\|v(t)\|_\infty ,
\]
using that $v_i(t)=\|v(t)\|_\infty$ and $|v_j(t)|\le\|v(t)\|_\infty$ for $j\ne i$,
and the same
argument applies at an
index attaining the minimum. We conclude that
$D^{+}V(x(t))\le-\lambda_{r}\,V(x(t))$, as desired.
\end{proof}

Note that Theorem~\ref{thm:global} does not assume strong
connectedness, non-degeneracy, or restrictions on the topology: it
only needs that a positive equilibrium exists. Those hypotheses are
only used in order to guarantee existence and to handle initial
conditions on the boundary of the orthant.

\begin{theorem}[global stability on the closed orthant]\label{thm:closed}
Assume \emph{(A2)} and \emph{(A3)}. Then \eqref{eq:sys} has exactly one equilibrium
$\xbar$ in $\Rp$, it lies in $\Rpp$, and it is globally asymptotically stable on $\Rp$.
\end{theorem}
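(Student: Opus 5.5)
Existence and uniqueness come first. By Lemma~\ref{lem:box} there is at least one equilibrium in $\Rp$. Under (A2) and (A3), Proposition~\ref{prop:pos} shows that every equilibrium in $\Rp$ is strictly positive. Since (A2) implies (A1), Theorem~\ref{thm:global}(a) then gives that the positive equilibrium $\xbar$ is unique in $\Rpp$, hence unique in $\Rp$. Theorem~\ref{thm:global} also gives Lyapunov stability of $\xbar$ within $\Rpp$ and attraction of every solution starting in $\Rpp$. Stability relative to $\Rp$ near $\xbar$ is immediate, because $\xbar$ is interior: a small neighborhood of $\xbar$ in $\Rp$ lies inside some $K_r$, and the sets $K_r$ are forward invariant.

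The remaining task is attraction of solutions starting on the boundary $\Rp\setminus\Rpp$. My plan is to show that any such solution enters $\Rpp$ instantly, i.e.\ $x(t)\in\Rpp$ for every $t>0$. Then I apply Theorem~\ref{thm:global}(c) to the solution restarted at a time $t_0>0$. I would argue by propagating positivity along edges.

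First, positivity persists. By variation of constants, $x_i(t)=e^{-d_it}x_i(0)+\int_0^t e^{-d_i(t-s)}f_i(x(s))\,ds$ with $f_i\ge0$, so once $x_i(t_0)>0$ we have $x_i(t)>0$ for all $t\ge t_0$.

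Second, positivity spreads from sources. Suppose $b_i>0$, or some $h_{ij}$ is inhibitory, or $x_j(t)>0$ on $(t_1,\infty)$ for some activating edge $j\to i$. Then $f_i(x(s))>0$ for $s>t_1$, since an inhibitory term is positive even at $x_j=0$. Hence the integral is positive and $x_i(t)>0$ for $t>t_1$.

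Now I would set up the induction. Let $S_0$ be the set of indices that are positive at time $0$, together with those having $b_i>0$ or an incoming inhibitory edge. By (A3), $S_0$ is nonempty unless $x(0)\in S_0$-free with no inhibition and no input. In that leftover case, however, (A3) forces some $b_i>0$ or some inhibitory edge, so $S_0\neq\emptyset$ anyway. Also, $x(0)\neq0$ is not required.

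Each index in $S_0$ is positive for all $t>0$. By strong connectedness (A2), every node is reachable from $S_0$ along edges, say within at most $n$ steps. Each edge, whether activating or inhibitory, transmits positivity: if $x_j>0$ on $(0,\infty)$, then $h_{ij}(x_j)>0$ on $(0,\infty)$. Positivity of $f_i$ on $(0,\infty)$ then gives $x_i>0$ on $(0,\infty)$. Hence $x(t)\in\Rpp$ for all $t>0$.

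The main obstacle I anticipate is the bookkeeping in the propagation step, specifically making sure that positivity holds on the whole open interval $(0,\infty)$ rather than only eventually. The integral representation handles this cleanly, since a positive continuous integrand on $(0,t)$ yields a strictly positive integral. One should also note that global asymptotic stability on $\Rp$ needs stability at $\xbar$ only in neighborhoods, which the interior position of $\xbar$ provides, and attraction of every point, which the argument above provides.
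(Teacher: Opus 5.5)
Your proposal is correct and follows essentially the same route as the paper: existence and positivity come from Lemma~\ref{lem:box} and Proposition~\ref{prop:pos}, uniqueness and attraction on $\Rpp$ from Theorem~\ref{thm:global}, and boundary data enter $\Rpp$ instantly, since the variation-of-constants formula propagates positivity from a node supplied by (A3) along the edges guaranteed by (A2). The differences are only cosmetic: you check Lyapunov stability on $\Rp$ explicitly and rightly avoid the paper's uniform bound $\varepsilon$. Your sentence justifying $S_0\neq\emptyset$ is garbled, though, and should simply say that (A3) directly places a node with $b_i>0$ or an incoming inhibitory edge in $S_0$.
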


\begin{proof}
Existence and positivity of the equilibrium follow from
Lemma~\ref{lem:box} and Proposition~\ref{prop:pos}, and
uniqueness and attraction from Theorem~\ref{thm:global}.

We only need to prove that any solution starting on the
boundary enters $\Rpp$ immediately. 
Write the variation of parameters formula:
\begin{equation}
x_i(t)\;=\;e^{-d_it}x_i(0)+\int_0^{t}e^{-d_i(t-s)}f_i(x(s))\,ds.
\label{eq:voc}
\end{equation}
It follows that $x_i(t)>0$ as soon as $f_i(x(\cdot))$ becomes positive
somewhere on $(0,t)$.
By (A3), there is some node $i_0$ with $b_{i_0}>0$ or with an
inhibitory incoming edge.
Since
solutions are bounded, by Lemma~\ref{lem:box}, in either case
$f_{i_0}(x(s))\ge\varepsilon>0$ for all $s\ge0$, so $x_{i_0}(t)>0$ for all $t>0$. If $j\to i$ is an edge and $x_j>0$ on
$(0,T]$, then $f_i(x(s))>0$ on $(0,T]$ (this is automatic if the edge
is inhibitory, and holds because $h_{ij}(x_j(s))>0$ if it is
activating). Thus, $x_i>0$ on $(0,T]$, by 
\eqref{eq:voc}. By property (A2), every node is reachable from $i_0$,
so $x(t)\in\Rpp$ for every $t>0$, and hence Theorem~\ref{thm:global}
applies from any such time on. 
\end{proof}

\begin{theorem}[the degenerate case]\label{thm:degen}
Assume \emph{(A2)} and that \emph{(A3)} fails, i.e.\ all $b_i=0$ and every interaction is
activating, so that $0$ is an equilibrium. Then exactly one of the following holds:
\begin{enumerate}\itemsep1pt
\item[(i)] $0$ is globally asymptotically stable on $\Rp$;
\item[(ii)] there is a (unique) equilibrium $\xbar\in\Rpp$, and this
  equilibrium attracts all of $\Rpp$. %, and in particular $0$ is unstable.
\end{enumerate}
\end{theorem}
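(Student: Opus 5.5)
The plan is to split according to whether a positive equilibrium exists. By Proposition~\ref{prop:pos}, under (A2) every equilibrium in $\Rp$ is either $0$ or lies in $\Rpp$. By Theorem~\ref{thm:global}(a), there is at most one equilibrium in $\Rpp$. So the two alternatives are mutually exclusive, and exactly one of them can hold. In case (ii), a positive equilibrium $\xbar$ exists, and Theorem~\ref{thm:global} (which needs only (A1), implied by (A2)) gives that $\xbar$ attracts all of $\Rpp$. The remaining work is to show that if no positive equilibrium exists, then $0$ is globally asymptotically stable on $\Rp$.

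\emph{Attraction in case (i).} Assume $0$ is the only equilibrium in $\Rp$. Take a solution in $\Rp$. By Lemma~\ref{lem:box} it is bounded, so its $\omega$-limit set $\Omega\subset B$ is nonempty, compact and invariant. Since every $h_{ij}$ is activating and $b_i=0$, the system is cooperative: $\partial f_i/\partial x_j\ge0$. I would then use monotonicity. Pick an upper comparison point $\hat x\ge x(0)$ with $\hat x$ large, say $\hat x_i\ge c_i$. By Lemma~\ref{lem:box}, $\dot x\le0$ at $\hat x$, since $f_i(\hat x)\le d_ic_i\le d_i\hat x_i$. Hence the solution from $\hat x$ is nonincreasing in the partial order and converges to an equilibrium. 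That equilibrium can only be $0$. By the comparison principle for cooperative systems, $0\le x(t)\le \hat x(t)\to0$. This gives global attraction, and the estimate with $\hat x$ taken near $0$ also gives Lyapunov stability.

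\emph{Lyapunov stability of $0$.} For each $\varepsilon>0$, choose $\hat x=\varepsilon\,c/\max_i c_i$, where $c=(c_1,\dots,c_n)$. The obstacle is that $\hat x$ need not satisfy $\dot x\le0$, because the linearization at $0$ may be unstable. This is where I expect the main difficulty. My first attempt is to avoid needing sub-equilibria near $0$ and argue through continuity instead. The trajectory from the big point $c$ is monotone decreasing to $0$. For $\varepsilon$-stability, the map $x(0)\mapsto x(t)$ is monotone. Given $\varepsilon$, take $T$ such that the trajectory from $c$ is within $\varepsilon$ of $0$ for $t\ge T$. Then by continuous dependence choose $\delta$ so that solutions from the $\delta$-ball stay within $\varepsilon$ on $[0,T]$. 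Any such solution is also dominated by the trajectory from $c$ (for $\delta\le\min c_i$), hence stays within $\varepsilon$ for all $t\ge T$. This gives stability, with the monotone comparison doing the essential work.

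It remains to justify the monotone comparison with a decreasing supersolution. The solution from $c$ satisfies $\dot x(0)\le0$, and cooperativity (Kamke's condition, which holds because the off-diagonal Jacobian entries $h_{ij}'\ge0$) propagates $\dot x(t)\le0$ for all $t$. So this solution is nonincreasing and bounded below by $0$. Its limit is an equilibrium in $B$, which in case (i) must be $0$. This part is routine. The only delicate point is verifying that the case split is exhaustive, and that follows from Lemma~\ref{lem:box}, Proposition~\ref{prop:pos} and Theorem~\ref{thm:global}(a).
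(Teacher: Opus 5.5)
Your proof is correct, but it reaches the dichotomy by a different route from the paper's. The paper never uses the monotone-flow (Kamke) machinery in continuous time. It only uses that $P$ is order preserving. It iterates $m^{k+1}=P(m^k)$ from $m^0=c$ to get the fixed point $m^\ast=\lim_k P^k(c)$, which is in fact the largest equilibrium in $B$. A coordinatewise scalar comparison then gives $\limsup_t x(t)\le m^\ast$ for every solution. The two alternatives are simply $m^\ast=0$ or $m^\ast\in\Rpp$, by Proposition~\ref{prop:pos}, and Theorem~\ref{thm:global} handles the second.

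You instead split on whether a positive equilibrium exists. When none does, you argue as follows:
\begin{itemize}
\item you take a super-equilibrium $\hat x\ge\max(x(0),c)$;
\item cooperativity makes the trajectory from $\hat x$ nonincreasing;
\item its limit is an equilibrium, hence $0$;
\item by Kamke's comparison theorem this trajectory dominates $x(t)$.
\end{itemize}

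The paper's version is more elementary and constructive: case (i) holds exactly when $P^k(c)\to0$. It is also the same iteration device reused in Step~1 of the proof of Theorem~\ref{thm:delay}.

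Your version has two advantages. First, it explicitly proves Lyapunov stability of $0$, using continuous dependence on $[0,T]$ together with the domination $0\le x(t)\le\phi_t(c)$ for $0\le x(0)\le c$, where $\phi_t$ is the flow. The paper's $\limsup$ argument yields only attractivity and leaves the stability half of ``globally asymptotically stable'' implicit. Second, you state the mutual exclusivity of (i) and (ii) explicitly.

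Delete the sentence in your first paragraph claiming that taking $\hat x$ near $0$ gives stability. As you note yourself, such $\hat x$ need not satisfy $\dot x\le0$; the continuous-dependence argument is what actually does the job.
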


\begin{proof}
Since all the functions $h_{ij}$ are nondecreasing, $P$ is order
preserving.
We also know that $P(c)\le c$ with $c$ as in
\eqref{eq:box}. Therefore $m^{0}:=c$, and $m^{k+1}:=P(m^{k})$ is a
nonincreasing sequence that converges to some $m^{\ast}\ge0$ with
$P(m^{\ast})=m^{\ast}$. A comparison argument then
shows that  $\limsup_{t}x_i(t)\le m^{k}_i$ for every solution and
for every $k$ (if eventually 
$x\le m^{k}+\epsilon$ then $\dot x_i\le-d_ix_i+f_i(m^{k}+\epsilon)$, so eventually
$x_i\le P_i(m^{k}+\epsilon)+\epsilon$; let $\epsilon\downarrow0$), hence
$\limsup_t x(t)\le m^{\ast}$. By Proposition~\ref{prop:pos}, $m^{\ast}$ is either $0$,
in which case every solution converges to $0$, or strictly positive, in which case
Theorem~\ref{thm:global} applies and $\Rpp$ is attracted to
$m^{\ast}$.
Note that, since points
arbitrarily close to $0$ leave a neighbourhood of $0$, the origin is unstable.
\end{proof}

\begin{remark}
It is interesting to ask why the plain $\infty$-norm fails.
Section 5.3 of \cite{BBGL} shows, for the planar system
\begin{equation}
\dot x_1=-\mu x_1+\frac{\alpha}{1+x_2/\beta},\qquad
\dot x_2=-\mu x_2+\frac{\alpha}{1+x_1/\beta},
\qquad \alpha=1,\ \beta=\tfrac12,\ \mu=\tfrac13,
\label{eq:planar}
\end{equation}
with equilibrium $(1,1)$, that $\|x-\xbar\|_\infty$ decreases only near the equilibrium.
That is a statement about the \emph{unweighted} sup-norm, whose level sets are squares
centred at $\xbar$. For the same system, \eqref{eq:dominance} reads
\[
\frac{\partial G_i}{\partial y_i}+\Bigl|\frac{\partial G_i}{\partial y_j}\Bigr|
=-\frac{\alpha\beta^{2}}{x_i\,(\beta+x_j)^{2}}<0
\qquad\text{on all of }\Rpp,\ \ j\ne i,
\]
so $V(x)=\max_i|\log x_i|$ \emph{is} a global Lyapunov function; its level sets are the
multiplicative boxes $[e^{-r},e^{r}]^2$. The two statements are consistent: the weights
that make the Jacobian dominant are $1/x_i$, not $1$.
\end{remark}

\section{A sketch with delays}

\begin{theorem}\label{thm:delay}
Assume properties \emph{(A2)} and \emph{(A3)}, and replace \eqref{eq:sys} by
\begin{equation}
\dot x_i(t)\;=\;-d_ix_i(t)+b_i+\sum_{j\ne i}h_{ij}\bigl(x_j(t-\tau_{ij})\bigr),
\label{eq:delay}
\end{equation}
with arbitrary constants $\tau_{ij}\in[0,\tau]$. Then the equilibrium $\xbar$ of
Theorem~\ref{thm:closed}, which does not depend on the delays, is globally
asymptotically stable for all nonnegative continuous initial data and for every choice of
delays.
\end{theorem}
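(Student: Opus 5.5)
The plan is to use the Razumikhin-type functional $W(t):=\max_i\sup_{s\in[t-\tau,t]}|\log(x_i(s)/\bar x_i)|$, i.e.\ the sup of $V$ over the history window, and to exploit the fact that the contraction inequality \eqref{eq:contractn} only needs the delayed arguments to lie in a multiplicative box around $\xbar$. First, the equilibria of \eqref{eq:delay} are the same as those of \eqref{eq:sys}, since constant solutions do not see the delays; so $\xbar$ from Theorem~\ref{thm:closed} is an equilibrium, and it is unique in $\Rp$. Second, positivity: by the variation of parameters formula \eqref{eq:voc} with delayed arguments, the argument in the proof of Theorem~\ref{thm:closed} carries over. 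Node $i_0$ from (A3) becomes positive immediately. Positivity then propagates along edges, now with a lag of at most $\tau$ per edge. By (A2), after time at most $n\tau$ the whole history segment lies in $\Rpp$. Boundedness follows as in Lemma~\ref{lem:box}, since $0\le f_i\le d_ic_i$ regardless of delays. So we may assume the initial history is a continuous positive function on $[-\tau,0]$. Then $W(0)<\infty$.

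Third, the key step: show that $W$ is nonincreasing. Let $w_i(t)=\log(x_i(t)/\bar x_i)$ and fix $t$ with $r:=W(t)>0$. Suppose an index $i$ has $w_i(t)=r$. Every delayed argument satisfies $|\log(x_j(t-\tau_{ij})/\bar x_j)|\le r$. The proof of Corollary~\ref{cor:contract} is termwise and pointwise, so it gives $\sum_j h_{ij}(x_j(t-\tau_{ij}))+b_i<e^r f_i(\xbar)=e^r d_i\bar x_i$. The strict inequality holds when $r>0$, using the same case split on whether the arguments coincide. Hence
\[
\dot w_i(t)=-d_i+\frac{1}{x_i(t)}\Bigl(b_i+\sum_{j\ne i}h_{ij}(x_j(t-\tau_{ij}))\Bigr)<-d_i+\frac{e^r d_i\bar x_i}{x_i(t)}=0.
\]
The case $w_i(t)=-r$ is symmetric. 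Thus the current value $V(x(t))$ cannot exceed $W(t)$, and $W$ is nonincreasing. In particular, the trajectory stays in the compact box $K_{W(0)}\subset\Rpp$, which gives Lyapunov stability.

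Fourth, convergence, which I expect to be the main obstacle. Monotonicity of $W$ alone does not give strict decrease, because the maximum may be attained at a past time. I would use quantitative dominance. On $K_R$ with $R=W(0)$, the gain $1-h/a$ is bounded by some $\gamma<1$, and $q_i/x_i\ge\lambda_R>0$. Using \eqref{eq:gain} in integrated form, one gets $|\log h_{ij}(x_j(t-\tau_{ij}))-\log h_{ij}(\bar x_j)|\le\gamma r$. It follows that whenever $|w_i(t)|\ge\theta r$ for a suitable $\theta<1$ close to $1$, we have $\frac{d}{dt}|w_i|\le-\delta r$ for some $\delta>0$ depending only on $R$. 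From this, a standard argument shows that within time at most $\tau+1/\delta$ every $|w_i|$ falls below $\theta' r$ and stays there. Indeed, while $|w_i|$ is above $\theta r$ it decreases at rate at least $\delta r$, and it cannot re-cross upward. Hence $W(t+T)\le\theta' W(t)$ with $T$ depending only on $R$. Iterating gives geometric decay of $W$, so $x(t)\to\xbar$, which completes global asymptotic stability. Alternatively, one could invoke a LaSalle principle for the functional $W$ on the compact set of history segments. On the $\omega$-limit set $W$ is constant, and the strict inequality above rules out a positive constant value. This uses the fact that a maximizing index at the current time must occur within every window of length $\tau$.
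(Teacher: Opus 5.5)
Your proof is correct, and it takes a genuinely different route from the paper's sketch.

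\textbf{The paper's route.} It first builds an absorbing box $K\subset\Rpp$ through the interval iteration $\ell^{k},m^{k}$, and this box does not depend on the initial data. It then fixes a uniform contraction factor $\kappa<1$ on $K$ and verifies a Razumikhin condition with $q=\kappa^{-1/2}>1$. Convergence is finally quoted from the Razumikhin--Lyapunov theorem of Hale--Verduyn Lunel, after the non-monotone decay function $w$ is replaced by a linear minorant.

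\textbf{Your route.} You obtain positivity by propagating it along edges, with a lag of at most $\tau$ per edge. You then use the strict inequality of Corollary~\ref{cor:contract}, which is in effect a Razumikhin condition with $q=1$, together with a first-crossing argument, to show that the window maximum $W$ is nonincreasing. This gives a trajectory-dependent invariant box $K_{W(t_0)}\subset\Rpp$ with no $\ell^k,m^k$ bootstrapping, and it gives Lyapunov stability explicitly.

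Your quantitative step is also sound. Take $\gamma<1$ the gain bound on $K_R$, $\theta\in(\gamma,1)$ and $r=W(t_1)$. Writing $\bar x_i/x_i=e^{-w_i}$ gives $\frac{d}{dt}|w_i|\le-\delta r$ whenever $|w_i|\ge\theta r$, for all $t\ge t_1$. Here $\delta=\min_i d_i(\theta-\gamma)e^{-R}$ does not depend on $r\in(0,R]$. Using $e^{-w_i}$ is cleaner than the paper's bound $x_i\le m^0_i+\varepsilon$. It follows that $W(t_1+\tau+(1-\theta)/\delta)\le\theta\,W(t_1)$, and iterating gives an explicit exponential rate for $W$ without citing any functional-differential-equation theorem.

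\textbf{Trade-offs.} The paper's route gives an absorbing set that is uniform over all nonnegative initial data, and a shorter write-up by citation. Yours is self-contained, parallels the proof of Theorem~\ref{thm:global}(c), and yields a rate.

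\textbf{One wording point.} As written, ``$V(x(t))$ cannot exceed $W(t)$'' is a tautology. What your first-crossing argument actually proves, and what you need, is $V(x(t))\le W(t_0)$ for all $t\ge t_0$.
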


\begin{proof}[Proof sketch]
\emph{Step 1 (absorbing box inside $\Rpp$).} Define $\ell^{0}=0$, $m^{0}=c$ and, for
$k\ge0$:
\[
\ell^{k+1}_i=\frac{1}{d_i}\min_{\ell^{k}\le\xi\le m^{k}}f_i(\xi),
\qquad
m^{k+1}_i=\frac{1}{d_i}\max_{\ell^{k}\le\xi\le m^{k}}f_i(\xi).
\]
Observe that the extrema are attained at corners, because each $h_{ij}$ is monotone. Then
$\ell^{k}$ is nondecreasing, $m^{k}$ is nonincreasing, and a comparison argument on
\eqref{eq:delay}, which is insensitive to the delays since only the
range of past values matters, gives
$\ell^{k}_i\le\liminf_t x_i(t)\le\limsup_t x_i(t)\le m^{k}_i$ for every $k$.
Under assumption (A3), at least one coordinate of $\ell^{1}$ is positive (a node with $b_i>0$, or
with an inhibitory incoming edge, for which $h_{ij}(m^{0}_j)>0$); and under (A2), each further
iterate makes positive the lower bound of every node reachable in one more step, so
$\ell^{n}>0$. Fix $\varepsilon\in(0,\min_i\ell^{n}_i)$ and let
$K:=\prod_i[\ell^{n}_i-\varepsilon,\;m^{0}_i+\varepsilon]\subset\Rpp$; there is $T$ with
$x(t)\in K$ for all $t\ge T$, and $\xbar\in K$.

\emph{Step 2 (uniform contraction factor).} By Lemma~\ref{lem:gain} and compactness,
\[
\kappa\;:=\;\max_{i,j}\ \sup\Bigl\{1-\frac{h_{ij}(s)}{a_{ij}}\;:\;
s\in[\ell^{n}_j-\varepsilon,\,m^{0}_j+\varepsilon]\Bigr\}\;<\;1.
\]
Repeating the proof of Corollary~\ref{cor:contract} with this uniform bound gives
$\bigl|\log (f_i(\xi)/f_i(\eta))\bigr|\le\kappa\,\rho(\xi,\eta)$ for all $\xi,\eta\in K$.

\emph{Step 3 (Razumikhin).} We again use $V(x)=\rho(x,\xbar)$. Let $q:=\kappa^{-1/2}>1$.
Suppose that $t\ge T+\tau$, $r:=V(x(t))>0$ and $V(x(s))\le q\,r$ for $s\in[t-\tau,t]$. With
$v_i=\log(x_i/\bar x_i)$, \eqref{eq:delay} gives, just as in
Theorem~\ref{thm:two},
\[
\dot v_i(t)=\frac{1}{x_i(t)}\Bigl[\,b_i+\sum_{j\ne i}h_{ij}\bigl(x_j(t-\tau_{ij})\bigr)
\;-\;e^{v_i(t)}f_i(\xbar)\Bigr].
\]
At any index $i$ with $v_i(t)=r$, Step 2 bounds the bracketed production term by
$e^{\kappa q r}f_i(\xbar)=e^{\sqrt{\kappa}\,r}f_i(\xbar)$, so, using $f_i(\xbar)=d_i\bar x_i$,
\[
\dot v_i(t)\;\le\;\frac{d_i\bar x_i}{x_i(t)}\bigl(e^{\sqrt{\kappa}r}-e^{r}\bigr)
\;\le\;-\,w(r),
\qquad
w(r):=\min_i\frac{d_i\bar x_i}{m^{0}_i+\varepsilon}
\bigl(e^{-\sqrt{\kappa}r}-e^{-r}\bigr)>0 ,
\]
where the last inequality uses that $e^{r}-e^{\sqrt{\kappa}r}\ge
e^{-\sqrt{\kappa}r}-e^{-r}$ for $r>0$, which is the same as
$\cosh r\ge\cosh(\sqrt{\kappa}\,r)$.
At an index $i$ with $v_i(t)=-r$, Step 2 bounds the production term from
below by $e^{-\sqrt{\kappa}r}f_i(\xbar)$, so that
\[
\dot v_i(t)\;\ge\;\frac{d_i\bar x_i}{x_i(t)}
\bigl(e^{-\sqrt{\kappa}r}-e^{-r}\bigr)\;\ge\;w(r),
\]
and therefore $\tfrac{d}{dt}\bigl(-v_i(t)\bigr)\le-w(r)$ at such an index.
The function $w$ is positive on $(0,\infty)$. However, it is not nondecreasing, so we
replace it by a linear minorant.  Let $R:=\max_{x\in
  K}\rho(x,\xbar)$. This is
finite, because $K$ is a compact set contained in $\Rpp$.  Since
\[
e^{-\sqrt{\kappa}r}-e^{-r}\;=\;r\int_{\sqrt{\kappa}}^{1}e^{-tr}\,dt
\;\ge\;(1-\sqrt{\kappa})\,e^{-R}\,r
\qquad\text{for }0\le r\le R,
\]
we have $w(r)\ge\widetilde w(r):=C(1-\sqrt{\kappa})e^{-R}\,r$ on $[0,R]$, where
$C:=\min_i d_i\bar x_i/(m^{0}_i+\varepsilon)$, and the function $\widetilde w$ is
continuous, increasing, and vanishes only at $0$.  Since $V(x(t))\le R$ for
$t\ge T$, we have proved the implication
\[
V(x(s))\le q\,V(x(t))\ \ \text{for all }s\in[t-\tau,t]
\qquad\Longrightarrow\qquad
D^{+}V(x(t))\;\le\;-\widetilde w\bigl(V(x(t))\bigr),
\qquad t\ge T+\tau ,
\]
which is the hypothesis of the Razumikhin--Lyapunov theorem as given
for example in
\cite[Thm. 4.2 in Section 5.4]{HaleVL}.
Applying that theorem with
$p(s)=qs$ gives $V(x(t))\to0$ as desired.
\end{proof}

Note that the only place where the delays appear is in Step 3.
The inequality $\dot v_i<0$ at the maximising index uses only that the \emph{other}
coordinates satisfy $|v_j|\le r$, which is precisely a Razumikhin hypothesis when those
coordinates are evaluated in the past.

\section{A more general result}

The results do not require the special form \eqref{eq:hill}. Much less is
needed.
Let us write the system as
$\dot x_i=-d_ix_i+f_i(x)$, and assume:
\begin{enumerate}\itemsep1pt
\item[(H1)] $f_i$ does not depend on $x_i$;
\item[(H2)] $f_i>0$ on $\Rpp$;
\item[(H3)] $\bigl|\log\bigl(f_i(x)/f_i(z)\bigr)\bigr|<\rho(x,z)$ for $x\ne z$ in $\Rpp$;
\item[(H3$'$)] for every $x\in\Rpp$,
  \[
  \sum_{j}x_j\Bigl|\frac{\partial f_i}{\partial x_j}(x)\Bigr|<f_i(x).
  \]
\end{enumerate}

Condition (H3$'$) implies (H3), by the mean value theorem applied along the
segment joining $\log z$ to $\log x$.
However, the converse is false. To see this, take $n=2$, %
$d_1=d_2=1$, $f_1(x)=\phi(x_2)$ and $f_2(x)=\phi(x_1)$, where
$\phi(s)=\exp\bigl(\arctan(\log s)\bigr)$.  The map
$u\mapsto\log\phi(e^u)=\arctan u$ is strictly distance-decreasing, so (H3)
holds, but on the other hand $s\phi'(s)/\phi(s)=1/(1+(\log s)^2)$ equals $1$ at
$s=1$, and thus (H3$'$) fails there.  In that example, $(1,1)$ is an equilibrium
whose Jacobian $\bigl(\begin{smallmatrix}-1&1\\1&-1\end{smallmatrix}\bigr)$ has
a zero eigenvalue, so clearly no exponential conclusion can hold under (H3) alone.

Under conditions (H1), (H2) and (H3), Theorem~\ref{thm:two} holds verbatim, and so do
parts (a) and (b) of Theorem~\ref{thm:global}, as well as the convergence
$x(t)\to\xbar$ asserted in part (c).  Under the stronger hypothesis (H3$'$),
Theorem~\ref{thm:local} and the exponential estimate in part~(c) of
Theorem~\ref{thm:global} hold as well, with $q_i(x)$ in \eqref{eq:q} replaced by:
\begin{equation}
q_i(x)\;:=\;f_i(x)-\sum_{j}x_j\Bigl|\frac{\partial f_i}{\partial x_j}(x)\Bigr|
\;>\;0 .
\label{eq:q-general}
\end{equation}
On the other hand, both Theorems~\ref{thm:closed} and~\ref{thm:degen}
are stated in terms of the graph (through properties (A2) and (A3))
and the monotonicity of the functions $h_{ij}$, so they cannot be
restated in general.

Note that the differential form of (H3$'$) also permits $j=i$, so (H1) can be
dropped at the cost of one extra term in
\eqref{eq:dominance}. Condition (H3$'$) is preserved under sums and under
adding nonnegative constants, and it holds for:
\begin{itemize}\itemsep1pt
\item the two Michaelis--Menten functions \eqref{eq:hill} (Lemma~\ref{lem:gain});
\item ``mixed'' interactions $(\alpha+\gamma s/k)/(1+s/k)$, which are sums of one of each;
\item Hill functions with exponent $p\le1$ (Remark~\ref{rem:hill});
\item the vector-valued Michaelis--Menten functions $G(w):=1/(1+w)$ and $F(w):=w/(1+w)$ with
$w=\sum_h\theta_h x_h$, $\theta_h\ge0$: there
$\sum_h x_h|\partial_h G|=G\cdot w/(1+w)<G$ and
$\sum_h x_h|\partial_h F|=F/(1+w)<F$;
\item additive Michaelis--Menten self-loops, that is, an extra term $h_{ii}(x_i)$ of the
form \eqref{eq:hill} inside $f_i$.
\end{itemize}

Linear positive terms $\phi\,x_h$ have logarithmic gain exactly $1$,
so they give (H3$'$) only with ``$\le$''. A row of the network containing
such a term is then merely nonexpansive unless it also contains a
Michaelis--Menten term or has $b_i>0$.

%; globalising the model with linear terms therefore needs an
%additional argument, and is not claimed here.

\begin{remark}[self-regulation through a shared promoter]\label{rem:bistable}
Hill exponent one is not by itself enough. Condition (H3$'$) can fail,
even with every regulatory term having an exponent equal to one, in
cases where a species can
regulate itself through a promoter which a different species can also
bind to. The two-gene competitive system of~\cite{Sontag2609} is:
\[
\dot x_1=\frac{\alpha_{01}+\alpha_1x_1/K_1}{1+x_1/K_1+x_2/K_{21}}-\delta_1x_1,
\qquad
\dot x_2=\frac{\alpha_{02}+\alpha_2x_2/K_2}{1+x_2/K_2+x_1/K_{12}}-\delta_2x_2,
\]
with $\alpha_{0i},\delta_i,K_i,K_{ij}>0$ and $\alpha_i\ge0$. Here
$\alpha_{0i}$ is the basal production rate of gene $i$, and $\alpha_i$
the production rate when its own product occupies its promoter, while
the two products compete for that promoter. Observe that the two
equations have formally different denominators, since self-binding and
cross-binding may have different dissociation constants. Each $f_i$
depends on $x_i$, so this is outside the class \eqref{eq:sys} and
outside the class of \cite{BBGL}. The question is whether it is inside
(H3$'$).
We look at this next.

Write $s:=x_1/K_1$ and $c:=x_2/K_{21}$ for the two occupancies seen by the promoter of
gene~1. Then we have that:
\[
\frac{\partial\log f_1}{\partial\log x_1}
=\underbrace{\frac{\alpha_1 s}{\alpha_{01}+\alpha_1 s}}_{\text{numerator}}
-\underbrace{\frac{s}{1+s+c}}_{\text{denominator}},
\qquad
\frac{\partial\log f_1}{\partial\log x_2}
=-\frac{c}{1+s+c},
\]
and therefore:
\beqn
f_1-\sum_{j}x_j\Bigl|\frac{\partial f_1}{\partial x_j}\Bigr|
&=&\frac{(\alpha_{01}+\alpha_1s)(1+s)-s\,\bigl|\alpha_1(1+c)-\alpha_{01}\bigr|}{(1+s+c)^{2}}
\\
&=&\begin{cases}
\dfrac{\alpha_{01}(1+2s)+\alpha_1 s\,(s-c)}{(1+s+c)^{2}},
&\ \alpha_1(1+c)\ge\alpha_{01},\\[2.2ex]
\dfrac{\alpha_{01}+\alpha_1 s\,(2+s+c)}{(1+s+c)^{2}},
&\ \alpha_1(1+c)<\alpha_{01}.
\end{cases}
\eeqn
Therefore, (H3$'$) fails at a point exactly when
\begin{equation}
  \alpha_1\,s\,(c-s)\;\ge\;\alpha_{01}\,(1+2s),
\label{eq:h3fail}
\end{equation}
which forces $\alpha_1>0$ and $c>s$.

Now suppose that $\alpha_1=\alpha_2=0$, so that an occupied promoter
produces nothing and self-binding acts only by displacing the
competitor.
In that case, \eqref{eq:h3fail} is never satisfied, (H3$'$) holds
throughout the open quadrant, and Theorem~\ref{thm:global} gives a
unique positive equilibrium attracting every positive
solution. Bistability is impossible in that case, for all values of
the eight parameters. The same computation also shows that allowing
self-dependence in the generalised functions $1/(1+w)$ and $w/(1+w)$
of \cite[Section 4.3]{BBGL}, where $\theta_{ii}=0$ is assumed, does
not change anything, because what makes (H3$'$) fail is the affine
numerator over the shared denominator, and not the shared denominator
by itself.

Now consider what happens when $\alpha_1>0$. The cancellation inside
the first gain is lost as soon as the competitor dominates the
promoter: for $c\gg s$ the two gains approach $\alpha_1
s/(\alpha_{01}+\alpha_1 s)$ and $1$, and their sum exceeds $1$. That
is the regime in which bistability may arise. Both of the explicit examples in
\cite{Sontag2609} have $\alpha_i>0$ and cross-binding much tighter
than self-binding, and for both the quantity displayed above is
negative at the middle equilibrium, about $-0.90$ and $-0.94$
respectively.  Those systems do have three positive equilibria, the
outer two asymptotically stable nodes and the middle one a saddle,
with the positive quadrant the disjoint union of the two basins and
the stable manifold of the saddle.

Also note that (H3$'$) is sufficient but far from necessary. In the
symmetric ``exclusive switch'' special case $\alpha_{0i}=\alpha_i=r$,
$K_i=K_{ij}=1/k$, $\delta_i=d$, condition \eqref{eq:h3fail} becomes
$sc\geq(1+s)^{2}$ and does hold in part of the quadrant, yet that system
is monostable for every $r,k,d>0$ as discussed in~\cite{Sontag2609}.
Condition (H3$'$) does hold, however, at the equilibrium itself, where
$s=c$, which is consistent with the local stability result.
\end{remark}

\section{Discussion}

The statement that $P$ is strictly distance-decreasing for $\rho$ can be interpreted in the
framework of nonlinear Perron--Frobenius theory in the style of
Lemmens and Nussbaum~\cite{LemmensNussbaum}.  The map $\rho$ is Thompson's part metric on the
positive cone, which is in turn closely related to Hilbert's
projective metric.  The useful feature here is that contraction of
$\rho$ does not require the map to be order preserving. This is
what lets activating and inhibitory edges be treated identically, and
why no monotonicity hypothesis appears anywhere above. For a
discussion of Birkhoff--Hilbert contraction as the mechanism behind
the qualitative behaviour of monotone, cooperative, and $2$-cooperative
systems, see \cite{Sontag2026}.

Let us close with a few words on the relation between our results and those
in~\cite{BBGL}.
The local results, Lemma~\ref{lem:box}, Proposition~\ref{prop:pos},
and Theorem~\ref{thm:local}, are in that paper, though our proofs are
shorter in two minor ways: existence is simply Brouwer fixed point
theory, applied to $P$ on a box, and uniqueness follows from
contraction rather than from Brouwer degree theory.  On the other
hand, the global statements (Theorems~\ref{thm:two}, \ref{thm:global},
\ref{thm:closed}, \ref{thm:degen}, and \ref{thm:delay}) answer the
question left open in their Section~5.3, and Theorem~\ref{thm:degen}
similarly generalizes their degenerate-case result from local to
global.

%\textbf{Numerical check.} On $40$ random networks ($3\le n\le 8$,
%random mixed activating/inhibitory topologies each containing a forced
%repressor cycle through all nodes, parameters spread over two to three
%decades): the identities \eqref{eq:gain} and the row sums
%\eqref{eq:dominance} agreed with direct numerical differentiation to
%$10^{-13}$ and were strictly negative at all $\sim10^{4}$ sampled
%states spanning $e^{\pm9}$; and along trajectories from initial
%conditions spanning $e^{\pm7}$, the function $V$ never increased
%(largest observed increment $9\times10^{-11}$, i.e.\ integrator noise)
%and converged to $0$ in every run, while $\|x(t)-\xbar\|_\infty$ did
%increase on some runs.

\subsection*{Acknowledgements}
This research was supported in part by AFOSR grants
FA9550-21-1-0289 and FA9550-22-1-0316.
The author worked closely with several AI assistants to brainstorm
ideas, search the literature, develop proofs, check arguments, and write
and edit the paper.  The author verified all mathematical statements and
proofs, and citations, and takes full responsibility for the content.


\newpage
\begin{thebibliography}{9}

\bibitem{14cdc_aminzare_tutorial}
Z.~Aminzare, E.~D.~Sontag,
\emph{Contraction methods for nonlinear systems: A brief introduction and
some open problems},
in: Proc.\ IEEE Conf.\ Decision and Control, Los Angeles, December 2014,
pp.~3835--3847.

\bibitem{BBGL}
F.~Blanchini, D.~Breda, G.~Giordano, D.~Liessi,
\emph{Michaelis--Menten networks are structurally stable},
Automatica \textbf{147} (2023) 110683.
\href{https://doi.org/10.1016/j.automatica.2022.110683}{doi:10.1016/j.automatica.2022.110683}.

\bibitem{bullo}
F.~Bullo,
\emph{Contraction Theory for Dynamical Systems},
edition 1.2, Kindle Direct Publishing, 2024.
\href{https://fbullo.github.io/ctds}{https://fbullo.github.io/ctds}.

\bibitem{desoer-vidyasagar-siam}
C.~A.~Desoer, M.~Vidyasagar,
\emph{Feedback Systems: Input-Output Properties},
Classics in Applied Mathematics, Vol.~55, SIAM, Philadelphia, 2009.

\bibitem{HaleVL}
J.~K.~Hale, S.~M.~Verduyn Lunel,
\emph{Introduction to Functional Differential Equations},
Springer, 1993.

\bibitem{LemmensNussbaum}
B.~Lemmens, R.~Nussbaum,
\emph{Nonlinear Perron--Frobenius Theory},
Cambridge University Press, 2012.

\bibitem{LOHMILLER1998683}
W.~Lohmiller, J.-J.~E.~Slotine,
\emph{On contraction analysis for non-linear systems},
Automatica \textbf{34} (1998) 683--696.
\href{https://doi.org/10.1016/S0005-1098(98)00019-3}{doi:10.1016/S0005-1098(98)00019-3}.

\bibitem{michelbook}
A.~N.~Michel, D.~Liu, L.~Hou,
\emph{Stability of Dynamical Systems: Continuous, Discontinuous, and
Discrete Systems},
Springer-Verlag, New York, 2007.

\bibitem{russo_dibernardo_sontag09}
G.~Russo, M.~di~Bernardo, E.~D.~Sontag,
\emph{Global entrainment of transcriptional systems to periodic inputs},
PLoS Computational Biology \textbf{6} (2010) e1000739.

\bibitem{mct2}
E.~D.~Sontag,
\emph{Mathematical Control Theory. Deterministic Finite-Dimensional Systems},
second ed., Texts in Applied Mathematics, Vol.~6, Springer-Verlag,
New York, 1998.

\bibitem{Sontag2010}
E.~D.~Sontag,
\emph{Contractive systems with inputs},
in: J.~C.~Willems et al. (Eds.), Perspectives in Mathematical System Theory, Control,
and Signal Processing, LNCIS 398, Springer, 2010, pp.~217--228.

\bibitem{Sontag2026}
E.~D.~Sontag,
\emph{Some intuition for why cooperative systems ``look 1-dimensional'' and
2-cooperative systems ``look 2-dimensional''},
arXiv:2607.27176, 2026.

\bibitem{Sontag2609}
E.~D.~Sontag,
\emph{A note on bistability of a two-gene competitive system},
arXiv:2609.27270, 2026.

\bibitem{Thompson}
A.~C.~Thompson,
\emph{On certain contraction mappings in a partially ordered vector space},
Proc.\ Amer.\ Math.\ Soc.\ \textbf{14} (1963) 438--443.

\end{thebibliography}
\end{document}